\numberwithin{equation}{section}
\newtheorem{claim*}{Claim}[section]
\newtheorem{theorem}{Theorem}[section]
\newtheorem{corollary}{Corollary}[section]
\newtheorem{remark}{Remark}
\newcommand\numberthis{\addtocounter{equation}{1}\tag{\theequation}}
\DeclareMathOperator{\E}{\mathbb{E}}
\DeclareMathOperator{\J}{\mathcal{J}}
\DeclareMathOperator{\Q}{\mathcal{Q}}
\DeclareMathOperator{\Pb}{\mathbb{Pb}}
\begin{document}

\title{Occupation times for the finite buffer fluid queue \\with phase-type ON-times}
\author{N. J. Starreveld, R. Bekker and M. Mandjes}
\maketitle

\begin{abstract}
\noindent    
In this short communication we study a fluid queue with a finite buffer. The performance measure we are interested in is the occupation time over a finite time period, i.e., the fraction of time the workload process is below some fixed target level. Using an alternating renewal sequence, we determine the double transform of the occupation time; the occupation time for the finite buffer M/G/1 queue with phase-type jumps follows as a limiting case.

\paragraph{Keywords:}Occupation time $\circ$ fluid model $\circ$ phase type distribution  $\circ$ doubly reflected process $\circ$ finite buffer queue

\paragraph{Affiliations:} {N. J. Starreveld} is with Korteweg-de Vries Institute for Mathematics, Science Park 904,
1098 XH Amsterdam, University of Amsterdam, the Netherlands. Email: {\footnotesize {\tt n.j.starreveld@uva.nl}}.
{R. Bekker} is with Department of Mathematics, Vrije Universiteit Amsterdam, De Boelelaan 1081a, 1081
HV Amsterdam, The Netherlands. Email: {\footnotesize {\tt r.bekker@vu.nl}}.
{M. Mandjes} is with Korteweg-de Vries Institute for Mathematics, University of Amsterdam, Science
Park 904, 1098 XH Amsterdam, the Netherlands; he is also affiliated with EURANDOM, Eindhoven
University of Technology, Eindhoven, the Netherlands, and CWI, Amsterdam the Netherlands.
Email: {\footnotesize {\tt m.r.h.mandjes@uva.nl}}.

\paragraph{Mathematics Subject Classification:} 60J55; 60J75; 60K25.

\end{abstract}


\section{Introduction}

Owing to their tractability, the OR literature predominantly focuses  on queueing systems with an {\it infinite} buffer or storage capacity. In
practical applications, however, we typically encounter systems with  {\it finite}-buffer queues. Often, the infinite-buffer queue is used to  approximate its finite-buffer counterpart, but it 
is questionable whether this is justified when the buffer is not so large. 

In specific cases explicit analysis of the finite-buffer queue {\it is}\, possible.  In this paper we consider the workload process  $\{Q(t)\}_{t\geq0}$ of a fluid queue with finite workload capacity $K>0$. Using the results for the fluid queue we also analyze the finite-buffer M/G/1 queue.
The performance measure we are interested in is the so-called {\it occupation time} of the set $[0,\tau]$ up to time $t$, for some $\tau \in [0,K]$, defined by 
\begin{equation}
\label{eq: Occupation time}
\alpha(t) := \int_0^t 1_{\{Q(s)\in [0,\tau]\}}{\rm d}s.
\end{equation}
Our interest in the occupation time can be motivated as follows.
The queueing literature mostly focuses on stationary performance measures (e.g. the distribution of the workload $Q(t)$ when $t\to\infty$) or on the performance after a finite time (e.g. the distribution of $Q(t)$ at a fixed time $t\geq0$). Such metrics do not always provide operators with the right means to assess the service level agreed upon with their clients. Consider for instance a call center in which the service level is measured over intervals of several hours during the day; a typical service-level target is then that 80\% of the calls should be answered within 20 seconds. Numerical results for this call center setting \cite{Roubos, Roubos2} show that there is severe fluctuation in the service level, even when measured over periods of several hours up to a day. Using a stationary measure for the average performance over a finite period may thus be highly inadequate (unless the period over which is averaged is long enough). The fact that the service level fluctuates on such rather long time scales has been observed in the queueing community only relatively recently (see \cite{Baron, Roubos, Roubos2} for some call center and queueing applications). Our work is among the first attempts to study occupation times in finite-capacity queueing systems. 

Whereas there is little literature on occupation times for queues, there is a substantial body of work  on occupation times in a broader setting. One stream of research focuses on occupation times for processes whose paths can be decomposed into regenerative cycles \cite{Cohen, Star, Tak, Zac}. Another branch is concerned with occupation times of spectrally negative L\'evy processes, see e.g. \cite{Lan, Loe, KypRef}. The results established typically concern occupation times until a first passage time, whereas \cite{KypRef} focuses on refracted L\'evy processes. In \cite{Star} spectrally positive L\'evy processes with reflection at the infimum were studied as a special case; we also refer to \cite{Star} and references therein for additional literature. A natural extension of L\'evy processes are Markov-modulated L\'evy processes; for the case of a Markov-modulated Brownian motion the occupation time has been analyzed in \cite{Bre}. To the best of our knowledge there is no paper on occupation times for doubly reflected processes, as we consider here.

In this paper we use the framework studied in \cite{Star}. More specifically, the occupation time is cast in terms of an alternating renewal process, whereas for the current setting the upper reflecting barrier complicates the analysis.
We consider a finite buffer fluid queue where during ON times the process increases linearly and during OFF times the process decreases linearly. We consider the case that ON times have a phase-type distribution and the OFF times have an exponential distribution. This framework allows us to exploit the regenerative structure of the workload process and provides the finite-capacity M/G/1 queue with phase-type jumps as a limiting special case. For this model we succeed in deriving closed-form results for the  Laplace transform (with respect to $t$) of the occupation time. Relying on the ideas developed in \cite{Asm2}, all quantities of interest can be explicitly computed as solutions of systems of linear equations. 

The structure of the paper is as follows. 
In Section~2 we describe the model and give some preliminaries. Our results are presented in Section~3. A numerical implementation of our method is presented in Section \ref{sec: numerics}.


\section{Model description and preliminaries}
\label{sec: Model}

We consider the finite capacity fluid queue with linear rates. The rate is determined by an independently evolving Markov chain, where we assume that there is only one state in which work decreases; this may be interpreted as the OFF time of a source that feeds work into the queue. There are multiple states of the underlying Markov chain during which work accumulates at (possibly) different linear rates. In case these rates are identical, periods during which work accumulates may be interpreted as ON times of a corresponding ON-OFF source. The ON-OFF source then has exponentially distributed OFF times, whereas the ON times follow a phase-type distribution.
The workload capacity is $K$ and work that does not fit is lost; see Subsection~\ref{subs:double-refl} for a more formal description. Some basic results concerning phase-type distributions and martingales that are used in the sequel are first presented in Subsection~\ref{subs:pre}.

\subsection{Preliminaries} \label{subs:pre}

\paragraph{Phase-type distributions}

 A phase-type distribution $B$ is defined as the {\em absorption time} of a continuous-time Markov process $\{\mathcal{J}(t)\}_{t\geq0}$ with finite state space $E\cup \{\partial\}$ such that $\partial$ is an absorbing state and the states in $E$ are transient. We denote by ${\bm\alpha}_0$ the initial probability distribution of the Markov process, by ${\bf T}$ the {\em phase generator}, i.e., the $|E|\times |E|$ rate matrix between the transient states and by ${\bf t}$ the {\em exit vector}, i.e., the $|E|$- dimensional rate vector between the transient states and the absorbing state $\partial$. The vector ${\bf t}$ can be equivalently written as $-\textbf{T}{\bf 1}$, where ${ \bf 1}$ is a column vector of ones. We denote such a phase-type distribution by $(n, {\bm \alpha}_0, {\bf T})$ where $|E| = n$. The cardinality of the state space $E$, i.e., $n$, represents the {\em number of phases} of the phase-type distribution $B$; for simplicity we assume that $E=\{1,\hdots,n\}$. In what follows we denote by $B$ a phase-type distribution with representation $(n, {\bm \alpha}_0, {\bf T})$; for a phase-type distribution with representation $(n,{\bf e}_i, {\bf T})$ we add the subscript $i$ in the notation. An important property of the class of phase-type distributions is that it is dense (in the sense of weak convergence) in the set of all probability distributions on $(0,\infty)$; see \cite[Thm. 4.2]{Asm}. For a phase-type distribution with representation $(n, {\bm \alpha}_0, {\bf T})$, the cumulative distribution function $B(\cdot)$, the density $b(\cdot)$ and the Laplace transform $\hat{B}[\cdot]$ are given in \cite[Prop. 4.1]{Asm}.  In particular, for $x\geq0$ and $s\geq0$, we have 
\begin{equation} 
\label{formulas}
\Pb(B> x) = -{\bm \alpha}_0^{\hspace{2pt} {\rm T}} e^{{\bf T}x}\textbf{1} \hspace{5mm} \text{and} \hspace{5mm} \hat{B}[s] = {\bm \alpha}_0^{\hspace{2pt} {\rm T}} (s{\bf I}-{\bf T})^{-1}{\bf t}.
\end{equation}
When the phase-type distribution has representation $(n,{\bf e}_i,{\bf T})$ we use the notation $\hat{B}_i(\cdot)$ instead of $\hat{B}(\cdot)$. For a general overview of the theory of phase-type distributions we refer to \cite{Asm, Asm2} and references therein.

\paragraph{Markov-additive fluid process (MAFP)}

Markov-additive fluid processes belong to a more general class of processes called {\em Markov-additive processes}, see \cite[Ch. XI]{Asm}. Consider a right-continuous irreducible Markov process $\{\mathcal{J}(t)\}_{t\geq0}$ defined on a filtered probability space $(\Omega, \mathcal{F}, \Pb)$ with a finite state space $E=\{1,\hdots,n\}$ and rate transition matrix $\mathcal{Q}$. While the Markov process $\mathcal{J}(\cdot)$ is in state $i$ the process $X(\cdot)$ behaves like a linear drift $r_i$. We assume that the rates $r_1,\hdots,r_n$ are independent of the process $\mathcal{J}(\cdot)$. Letting $\{T_i, i\geq0\}$ be the jump epochs of the Markov process $\mathcal{J}(\cdot)$ (with $T_0=0$) we obtain the following expression for the process $X(\cdot)$, 
\begin{align*}
X(t) &= X_0 +\sum_{m\geq1}\sum_{\substack {1\leq i\leq n }}\Bigg( r_i\left(T_m-T_{m-1}\right)1_{\{\J(T_{m-1})=i, T_m\leq t\}}+r_i (t-T_{m-1})1_{\{\J(T_{m-1})=i, T_{m-1}\leq t<T_m\}}\Bigg), \numberthis{\label{eq:MAPl}}
\end{align*}
where $t\geq0$ and $X_0\in\mathcal{F}_0$ is independent of the Markov process $\J(\cdot)$ and the rates $r_1,\hdots,r_n$. The process $X(\cdot)$ defined in (\ref{eq:MAPl}) will be referred to as a {\em Markov-additive fluid process} and abbreviated as MAFP. For $z\in\mathbb{C}^{\text{Re}\geq0}$, the {\em matrix exponent} of the MAFP is defined as 
\begin{equation}
\label{MAPexp}
F(z) = \mathcal{Q} -z\hspace{1pt}\text{diag}(r_1,\hdots,r_n)=\Q -z\hspace{1pt}{\bf \Delta}_r.
\end{equation} 
In what follows we shall need information concerning the roots of the equation 
\begin{equation}
\text{det}\left(F(z) - q\textbf{I}\right) = \text{det}(\Q-z{\bf \Delta}_r - q\textbf{I})=0,
\end{equation}
where ${\bf \Delta}_r=\text{diag}(r_1,\hdots,r_n)$ and $q\geq0$. From \cite{Iva} we have that there exist $n$ values $\rho_1(q),\hdots,\rho_n(q)$ and corresponding vectors ${\bf h}_1(q),\hdots,{\bf h}_n(q)$ such that, for each $k=1,\hdots,n$, $\text{det}\left(\Q-\rho_k(q){\bf \Delta}_r - q\textbf{I}\right)=0$ and $(\Q-\rho_k(q){\bf \Delta}_r - q\textbf{I}){\bf h}_k(q)=0$. 

\paragraph{The Kella-Whitt martingale} The counterpart of the Kella-Whitt martingale for {\em Markov-additive processes} was established in \cite{AsmK}; let $\{Y(t)\}_{t\geq0}$ be an adapted continuous process having finite variation on compact intervals. Set $Z(t) = X(t) + Y(t)$ and let $z\in\mathbb{C}^{\text{Re}\geq0}$. Then, for every initial distribution $(X(0),\J(0))$,
\begin{equation}
\label{KWM}
{\bf M}(z,t): = \int_0^t e ^{-z Z(s)}{\bf e}_{\J(s)}{\rm d}s F(z) + e^{-z Z(0)}{\bf e}_{\J(0)} - e^{-z Z(t)}{\bf e}_{\J(t)} - z\int_0^t e^{-z Z(s)}{\bf e}_{\J(s)} {\rm d}Y(s)
\end{equation}
is a vector-valued zero mean martingale.

\subsection{Fluid model with two reflecting barriers} \label{subs:double-refl}

 The MAFP $(X(t),\J(t))_{t\geq0}$ we analyze has a modulating Markov process $\{\J(t)\}_{t\geq0}$ with state space $E=\{1,\hdots, n+1\}$ and generator $\Q$ given by  
\begin{equation}
\label{generator}
\Q = \begin{bmatrix}
       -\lambda &  \lambda{\bm \alpha}_0^{{\rm T}}\\[0.5em]
       {\bf t}            & {\bf T}
  
     \end{bmatrix},
 \end{equation}
which is a $(n+1)\times (n+1)$ matrix. Additionally we suppose that $\lambda>0$, ${\bf t}$ is a $n\times1$ column vector with non-negative entries, ${\bm \alpha}_0$ is a $n\times 1$ column vector with entries that sum up to one and ${\bf T}$ is a $n\times n$ matrix with non-negative off-diagonal entries. The column vector ${\bf t}$ and the matrix ${\bf T}$ are such that each row of $\Q$ sums up to one, alternatively we can write ${\bf t} = -{\bf T}\textbf{1}$. On the event ${\{\J(\cdot)=1\}}$ the process $X(\cdot)$ decreases linearly with rate $r_1<0$ and on the event $\{\J(\cdot)=i\}$, for $i=2,\hdots,n+1$, $X(\cdot)$ increases linearly with rate $r_i>0$. Such a MAFP decreases linearly with rate $r_1$ during OFF-times, which are exponentially distributed with parameter $\lambda$, and increases linearly with rates $r_i$ during ON-times, which have a phase-type $(n,{\bm \alpha}_0,{\bf T})$ distribution. Depending on the state of the modulating process we have a different rate.  This model is motivated by finite capacity systems with an alternating source: during OFF times work is being served with rate $r_1$ while during ON times work accumulates with rates $r_2,\hdots,r_{n+1}$. 

The workload process $\{Q(t)\}_{t\geq0}$ we are interested in is formally defined as a solution to a two sided Skorokhod problem, i.e., for a Markov-additive fluid process $\{X(t)\}_{t\geq0}$ as defined in (\ref{eq:MAPl}), we have
\begin{equation}
\label{Skorokhod}
Q(t) = Q(0)+X(t) +L(t) - \bar{L}(t).
\end{equation}
In the above expression $\{L(t)\}_{t\geq0}$ represents the local time at the infimum and $\{\bar{L}(t)\}_{t\geq0}$ the local time at $K$. Informally, for $t>0$, $L(t)$ is the amount that has to be added to $X(t)$ so that it stays non-negative while $\bar{L}(t)$ is the amount that has to be subtracted from $X(t)+L(t)$ so that it stays below level $K$. It is known that such a triplet exists and is unique, see \cite{Kella, Kruk}. For more details we refer to \cite{Man} and references therein. For notational simplicity we assume that $Q(0) =\tau$ and that $\J(0)=1$, i.e., we start with an OFF time; the cases $\{Q(0)<\tau, \J(0)\neq1\}$ and $\{Q(0)>\tau, \J(0)\neq1\}$ can be dealt with analogously at the expense of more complicated expressions.
For the MAFP described above the matrix exponent is a $(n+1)\times(n+1)$ matrix. For $q>0$, denote by $\rho_1(q),\hdots,\rho_{n+1}(q)$ the $n+1$ roots of the equation $\text{det}(\Q-z{\bf \Delta}_r-q\textbf{I})=0$ and consider, for $k=1,\hdots,n+1$, the vectors ${\bf h}_k(q)= ({\rm h}_{k,1}(q),\hdots,{\rm h}_{k,n+1}(q))$ defined by 
\begin{equation}
\label{vectors}
{\rm h}_{k,1}(q) =1 \hspace{2mm} \forall k=1,\hdots,n+1 \hspace{5mm} \text{and} \hspace{5mm} {\rm h}_{k,j}(q)= -{\bf e}_{j-1}^{\hspace{2pt} {\rm T}} ({\bf T}-\rho_k(q)\bar{{\bf \Delta}}_r - q\textbf{I})^{-1}{\bf t} \hspace{2mm}\text{for}\hspace{2mm} j=2,\hdots,n+1,
\end{equation}
where ${\bf e}_j$ is the $n\times1$ unit column vector with 1 at position $j$, ${\bf T}$ and ${\bf t}$ are as in (\ref{generator}) and $\bar{\bf \Delta}_{r}$ is the $n\times n$ diagonal submatrix of $\bar{\bf \Delta}_{r}$ with $r_{j}$ at position $(j-1,j-1)$, for $j=2,\hdots,n+1$.
For the vectors defined in (\ref{vectors}) we have that $(\Q-\rho_k(q){\bf \Delta}_r-q\textbf{I}){\bf h}_k(q) =0$ for all $k=1,\hdots,n+1$.


\section{Result}

\subsection{The Markov Additive Fluid Process}
\label{sec: MAFP}

For the analysis of the occupation time $\alpha(\cdot)$ we observe that the workload process $\{Q(t)\}_{t\geq0}$ alternates between the two sets $[0,\tau]$ and $(\tau,K]$. Due to the definition of $\{X(t)\}_{t\geq0}$ both upcrossings and downcrossings of level $\tau$ occur with equality. Moreover, we see that an upcrossing of level $\tau$ can occur only when the modulating Markov process is in one of the states $2,\hdots,n+1$. Similarly, a downcrossing of level $\tau$ can occur only while the modulating Markov process is in state 1. We define the following first passage times, for $\tau\geq0$,
\begin{equation*}
\sigma:=\inf_{t>0}\{t:Q(t)= \tau\,|\, Q(0) = \tau, \J(0)=1\},
\:\:\:
T := \inf_{t>0}\{t: Q(t) = \tau\,|\, Q(0) = \tau, \J(0)\neq1\}.
\end{equation*}   
We use the notation $(T_i)_{i\in{\mathbb N}}$ for the sequence of successive downcrossings and $(\sigma_i)_{i\in{\mathbb N}}$ for the sequence of successive upcrossings of level $\tau$. An extension of \cite[Thms.\ 1 and 2]{Cohen} for the case of doubly reflected processes shows that $(T_i)_{i\in{\mathbb N}}$ is a renewal process, and hence the successive sojourn times, $D_1 := \sigma_1$, $D_i:=\sigma_i - T_{i-1}$, for $i\geq2$, and $U_i := T_i - \sigma_i$, for $i\geq1$, are sequences of well defined random variables. In addition, $D_{i+1}$ is independent of $U_i$ while in general $D_i$ and $U_i$ are dependent. We observe that the random vectors $(D_i,U_i)_{i\in\mathbb{N}}$ are i.i.d.\ and distributed as a generic random vector $(D,U)$. In Figure\ \ref{Figure PathG} a realization of $Q(\cdot)$ is depicted.

\begin{figure}[H]
   \centering
    \includegraphics{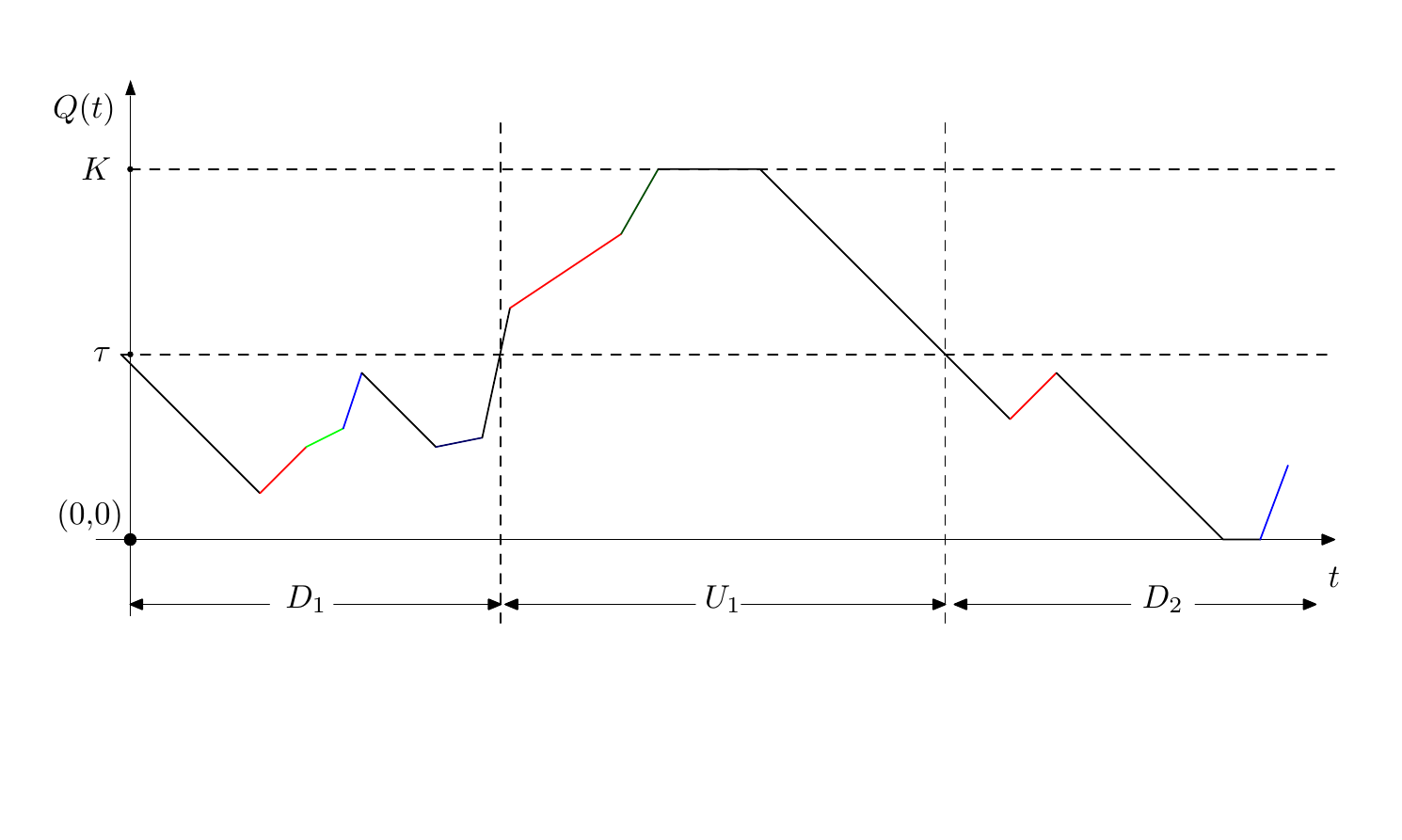}
\vspace{-70pt}
\caption{The workload process in a finite capacity fluid queue}
    \label{Figure PathG}
	
\end{figure}

\vspace{20pt}

The double transform of the occupation time $\alpha(\cdot)$ in terms of the joint transform of $D$ and $U$ is given in \cite[Theorem 3.1]{Star} which we now restate:

\begin{theorem}
\label{Old}
For the transform of the occupation time $\alpha(\cdot)$, and for $q\geq0, \theta\geq0$, we have 
\begin{equation}
\label{DoubleTransform}
 \int_0^\infty e^{-qt}\E e^{-\theta \alpha(t)}{\rm d}t =\frac{1}{1-L_{1,2}(q+\theta,q)}\Bigg[\frac{1-L_1(q+\theta)}{q+\theta}+\frac{L_1(q+\theta)-L_{1,2}(q+\theta,q)}{q}\Bigg],
\end{equation}

\hspace{-17pt} where, for $\theta_1,\theta_2\geq0$,
\[L_{1,2}(\theta_1,\theta_2) = \E e^{-\theta_1 D-\theta_2 U} \hspace{5mm} \text{and} \hspace{5mm} L_1(\theta_1) =L_{1,2}(\theta_1,0)= \E e^{-\theta_1 D}.\]
\end{theorem}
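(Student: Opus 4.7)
The plan is to decompose the integral $\int_0^\infty e^{-qt}\E e^{-\theta\alpha(t)}{\rm d}t$ according to the alternating renewal structure $(D_i,U_i)_{i\geq 1}$ identified just before the statement, and then collapse the resulting series using the i.i.d.\ property of the pairs. The guiding observation is that on $[T_{i-1},\sigma_i]$ we have $Q(s)\in[0,\tau]$, so $\alpha(t)=\sum_{j<i}D_j+(t-T_{i-1})$ grows linearly, while on $[\sigma_i,T_i]$ we have $\alpha(t)=\sum_{j\leq i}D_j$ and $\alpha$ is constant.

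First, I would split
\begin{equation*}
\int_0^\infty e^{-qt-\theta\alpha(t)}{\rm d}t=\sum_{i\geq 1}\int_{T_{i-1}}^{\sigma_i}e^{-qt-\theta\alpha(t)}{\rm d}t+\sum_{i\geq 1}\int_{\sigma_i}^{T_i}e^{-qt-\theta\alpha(t)}{\rm d}t,
\end{equation*}
using that $T_i\to\infty$ a.s. Substituting the explicit form of $\alpha$ on each subinterval and using $T_{i-1}=\sum_{j<i}(D_j+U_j)$ to rewrite $e^{-qT_{i-1}-\theta\sum_{j<i}D_j}=\prod_{j<i}e^{-(q+\theta)D_j-qU_j}$, I would evaluate the two inner integrals in closed form: the first equals this prefactor times $(1-e^{-(q+\theta)D_i})/(q+\theta)$, and the second equals the same prefactor times $e^{-(q+\theta)D_i}(1-e^{-qU_i})/q$.

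Second, I would bring the expectation inside (Tonelli applies since all integrands are non-negative). By the i.i.d.\ property of $(D_j,U_j)_{j\geq 1}$, the $\prod_{j<i}$ prefactor has expectation $L_{1,2}(q+\theta,q)^{i-1}$, independently of the factor depending on $(D_i,U_i)$. The $D_i$-factor in the first sum gives $(1-L_1(q+\theta))/(q+\theta)$; the $(D_i,U_i)$-factor in the second sum gives $(L_1(q+\theta)-L_{1,2}(q+\theta,q))/q$ after combining the two exponentials $e^{-(q+\theta)D_i}-e^{-(q+\theta)D_i-qU_i}$ in the numerator. Summing the geometric series $\sum_{i\geq 1}L_{1,2}(q+\theta,q)^{i-1}=1/(1-L_{1,2}(q+\theta,q))$ and grouping the two contributions produces exactly (\ref{DoubleTransform}).

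The argument is essentially careful bookkeeping; the only substantive check is that $L_{1,2}(q+\theta,q)<1$ so the geometric series converges. This holds for $q>0$ because $D+U>0$ a.s., and the i.i.d.\ structure of $(D_i,U_i)_{i\geq 1}$ (together with independence of $D_{i+1}$ from $U_i$) obtained via the extension of \cite{Cohen} cited above is precisely what lets the prefactor factorise cleanly under the expectation. No additional analytic input is needed beyond these structural properties.
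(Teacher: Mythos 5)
Your proposal is correct: the decomposition of $\alpha$ into linear growth on $[T_{i-1},\sigma_i]$ and constancy on $[\sigma_i,T_i]$, the factorisation of the prefactor via $T_{i-1}=\sum_{j<i}(D_j+U_j)$, and the geometric-series collapse all check out, and the convergence remark ($L_{1,2}(q+\theta,q)<1$) is the right thing to verify. Note that the paper itself gives no proof of this statement --- it restates \cite[Theorem 3.1]{Star} --- and your argument is precisely the standard alternating-renewal computation underlying that cited result, so nothing further is needed.
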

\begin{remark} 
Theorem \ref{Old} holds when the time instances $D_i+U_i, i=1,2,\hdots$ are regenerative points, i.e. the pairs $(D_i,U_i)_{i\geq1}$ form a sequence of i.i.d. random vectos. This property does not hold in general for an arbitrary MAFP, it holds though for the MAFP presented in Section \ref{subs:double-refl} as there is only one state with negative drift.

\end{remark}

To analyze the occupation time it thus suffices to determine the joint transform of the random variables $D$ and $U$, i.e., $L_{1,2}(\cdot,\cdot)$. The Laplace transform of the random variables $D$ and $U$ have been derived in \cite{Bean} for MAFPs. Our proofs can be trivially extended in order to derive the joint transform of the random variables $D$ and $U$ for a general MAFP, i.e. with multiple states with negative drifts, which is an additional novelty of this paper. Our interest lies though in the occupation time $\alpha(\cdot)$ so we restrict ourselves to the MAFP considered in Section \ref{subs:double-refl}. 

 For $i=2,\hdots,n+1$ we define the first hitting time of level $\tau$ with initial condition $(X(0),\J(0))=(\tau,i)$ as follows:
\[T_i :=\inf_{t\geq0}\{t:Q(t)=\tau|Q(0)=\tau, \J(0) = i \} \hspace{3mm} \text{and} \hspace{3mm} w_i(\theta_2) = \E \left[e^{-\theta_2 T_i}\right]=\E\left[ e^{-\theta_2 T}|\J(0)=i\right].\]
Considering the event $E_i$ that an upcrossing of level $\tau$ occurs while the modulating process $\J(\cdot)$ is in state $i$, for $i=2,\hdots,n+1$, we obtain, for $\theta_1,\theta_2\geq0$, 
\begin{equation}
\label{eq: DEC}
\E \left[e^{-\theta_1 D-\theta_2 U}\right] = \E\left[ e^{-\theta_1\sigma-\theta_2 T}\right]=\sum_{i=2}^{n+1} \E\left[ e^{-\theta_1\sigma}1_{\{E_i\}}\right]\E\left[ e^{-\theta_2 T}|E_i\right]=\sum_{i=2}^{n+1} \E\left[ e^{-\theta_1\sigma}1_{\{E_i\}}\right]\E \left[e^{-\theta_2 T_i}\right].
\end{equation}
In what follows we use, for $\theta_1\geq0$ and $i=2,\hdots,n+1$, the notation 
\begin{equation}
\label{eq:zi}
z_i(\theta_1):=\E\left[e^{-\theta_1\sigma}1_{\{E_i\}}\right] = \E\left[ e^{-\theta_1 \sigma}1_{\{\J(\sigma)=i\}}\right].
\end{equation}
It will be shown that these terms can be computed as the solution of a system of linear equations.
The idea of conditioning on the phase when an {\em upcrossing} occurs and using the {\em conditional independence} of the corresponding time epochs has been developed in \cite{Asm2}. The factors appearing in each term in (\ref{eq:zi}) can also be determined using the results of \cite{Bean}. Determining the factors involved in the terms presented above is the main contribution of the analysis that follows. We first present the exact expression for the double transform of the random variables $(D,U)$.

\begin{theorem}
\label{th:Jointtransform}
For $\theta_1,\theta_2\geq0$, the joint transform of the random variables $D$ and $U$ is given by 
\begin{equation}
\label{jointtransform}
\E e^{-\theta_1D-\theta_2U} = \frac{1}{C(\theta_2)}\sum_{i=2}^{n+1} z_i (\theta_1)\sum_{k=1}^{n+1}(-1)^{k+1}c_k(\theta_2){\rm h}_{k,i}(\theta_2),
\end{equation}
where the quantities $c_k(\theta_2), k=1,\hdots,n+1$ and $C(\theta_2)$ depend only on $\theta_2$ and are defined below in (\ref{DET}) and (\ref{eq:CT}); $z_i(\theta_1)$ for $i=2,\hdots,n+1$ are determined as the solution of a system of linear equations; this system is given in (\ref{eq:System1}). The column vectors ${\bf h}_k(\cdot)$ for $k=1,\hdots,n+1$ are defined in (\ref{vectors}).
\end{theorem}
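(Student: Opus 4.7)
The plan is to reduce the joint transform to the computation of $w_i(\theta_2):=\E[e^{-\theta_2 T_i}]$ for $i=2,\ldots,n+1$. Using the decomposition (\ref{eq: DEC}), it suffices to show that
\[
w_i(\theta_2) = \frac{1}{C(\theta_2)}\sum_{k=1}^{n+1}(-1)^{k+1}c_k(\theta_2){\rm h}_{k,i}(\theta_2),
\]
since substituting into $\E e^{-\theta_1 D-\theta_2 U}=\sum_{i=2}^{n+1}z_i(\theta_1)w_i(\theta_2)$ immediately gives (\ref{jointtransform}). The $z_i(\theta_1)$'s arise from a separate (but analogous) martingale argument applied to the downward excursion starting in state $1$ and are taken as given, characterised as the solution of the linear system advertised after (\ref{eq:zi}).

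To compute $w_i(\theta_2)$, I would apply the Kella-Whitt identity (\ref{KWM}) to the doubly reflected workload $Q(t)=X(t)+L(t)-\bar{L}(t)$ starting from $(Q(0),\J(0))=(\tau,i)$, with $Y=L-\bar{L}$. The key first observation is that on the interval $[0,T_i]$ the workload stays strictly above $\tau>0$, so the lower local time is constant, ${\rm d}L\equiv 0$; hence only the upper reflection contributes to ${\rm d}Y=-{\rm d}\bar{L}$, whose support is contained in $\{Q=K\}$. Multiplying the martingale identity by the discount factor $e^{-\theta_2 t}$ (equivalently, applying It\^o/Dynkin to $t\mapsto e^{-\theta_2 t}e^{-zQ(t)}{\bf e}_{\J(t)}$) and taking expectations at the stopping time $T_i$ produces
\[
e^{-z\tau}w_i(\theta_2){\bf e}_1 - e^{-z\tau}{\bf e}_i = \E\!\left[\int_0^{T_i}\!e^{-\theta_2 s}e^{-zQ(s)}{\bf e}_{\J(s)}{\rm d}s\right]\!(F(z)-\theta_2{\bf I}) + ze^{-zK}\sum_{j=2}^{n+1}\ell_{i,j}(\theta_2){\bf e}_j,
\]
where $\ell_{i,j}(\theta_2):=\E\!\left[\int_0^{T_i}e^{-\theta_2 s}1_{\{\J(s)=j\}}{\rm d}\bar{L}(s)\right]$.

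The heart of the argument is then the ``$\rho_k$-trick'': specialise to $z=\rho_k(\theta_2)$ and right-multiply by the column vector ${\bf h}_k(\theta_2)$. By construction $(F(\rho_k(\theta_2))-\theta_2{\bf I}){\bf h}_k(\theta_2)={\bf 0}$, so the unknown occupation-measure integral disappears. Using ${\rm h}_{k,1}(\theta_2)=1$ from (\ref{vectors}), the identity collapses to the scalar equation
\[
w_i(\theta_2) - \rho_k(\theta_2)e^{-\rho_k(\theta_2)(K-\tau)}\sum_{j=2}^{n+1}{\rm h}_{k,j}(\theta_2)\ell_{i,j}(\theta_2)={\rm h}_{k,i}(\theta_2),\qquad k=1,\ldots,n+1.
\]
For fixed $i$ this is a linear system of $n+1$ equations in the $n+1$ unknowns $(w_i(\theta_2),\ell_{i,2}(\theta_2),\ldots,\ell_{i,n+1}(\theta_2))$, with coefficient matrix depending on $\theta_2$ but not on $i$. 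Denoting its determinant by $C(\theta_2)$ and the minors of the first column by $c_k(\theta_2)$, Cramer's rule expanded along the first column (whose entries are precisely $({\rm h}_{k,i}(\theta_2))_k$) yields the representation of $w_i(\theta_2)$ with the $(-1)^{k+1}$ cofactor signs.

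The main anticipated obstacles are: (i) justifying optional stopping, which requires finiteness and integrability of $T_i$---immediate here because the buffer is finite and the modulating chain is irreducible on a finite state space; (ii) the careful derivation of the discounted form of (\ref{KWM}) via an integration-by-parts against $e^{-\theta_2 t}$, so that the matrix factor $F(z)$ becomes $F(z)-\theta_2{\bf I}$ which is exactly annihilated on the right by ${\bf h}_k(\theta_2)$; and (iii) verifying that $C(\theta_2)\neq 0$ for $\theta_2\geq 0$, resting on distinctness of the roots $\rho_k(\theta_2)$ and linear independence of the vectors ${\bf h}_k(\theta_2)$---a standard but delicate point in the Asmussen/Kella machinery, here adapted to the doubly reflected setting where the upper boundary at $K$ produces the extra $e^{-\rho_k(K-\tau)}$ factors that make the coefficient matrix genuinely buffer-dependent.
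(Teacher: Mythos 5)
Your proposal follows essentially the same route as the paper: the decomposition $\E e^{-\theta_1 D-\theta_2 U}=\sum_i z_i(\theta_1)w_i(\theta_2)$, the discounted Kella--Whitt identity for the process reflected at $K$ stopped at $T_i$, the substitution $z=\rho_k(\theta_2)$ with right-multiplication by ${\bf h}_k(\theta_2)$ to eliminate the occupation integral, and Cramer's rule on the resulting $(n+1)\times(n+1)$ system in $(w_i,\bar\ell_{i,2},\dots,\bar\ell_{i,n+1})$. The only differences are cosmetic (you build in the discount factor by integration by parts where the paper takes $Y(t)=\tau-\bar L(t)+\theta_2 t/z$, and your sign on the local-time unknowns differs from the paper's (\ref{System2}), which is immaterial for the determinant ratio), so the proof is correct and matches the paper's argument.
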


The outer sum in (\ref{jointtransform}) ranging from 2 to $n+1$ represents the conditioning on one of the $n+1$ phases of the modulating Markov process when an upcrossing occurs, that is the event $\{\J(\sigma) = i\}$, $i=2,\hdots,n+1$. Observe that an upcrossing of level $\tau$ is not possible when $\J(\cdot)$ is in state 1 because then the process $X(\cdot)$ decreases. The terms $z_i(\theta_1)$, as defined in (\ref{eq:zi}), denote the transforms of $\sigma$ on the event the upcrossing of level $\tau$ occurs while the modulating Markov process is in state $i$, and the inner sum in (\ref{jointtransform}) concerns the transforms of $T$ conditional on the event $\{\J(\sigma)=i\}$. The Markov property of the workload process yields that 
\begin{equation}
\label{wi2}
\E\left[e^{-\theta_2 T} | E_i\right] = \E\left[e^{-\theta_2 T} | \J(\sigma)=i\right] =\E\left[e^{-\theta_2 T} | \J(0)=i\right] = w_i(\theta_2).
\end{equation}
\begin{proof}[Proof of Theorem \ref{th:Jointtransform}]The proof relies on the decomposition given in (\ref{eq: DEC}). Below we analyze the two expectations at the RHS of (\ref{eq: DEC}) separately.

$\circ$ We determine $z_i(\theta_1)$, for $i=2,\hdots,n+1$, as the solution of a system of linear equations; this idea was initially developed in \cite[Section 5]{Asm2} and essentially relies on the Kella-Whitt martingale for Markov Additive Processes. The Kella-Whitt martingale for a Markov Additive Process reflected at the infimum, has, for all $z\geq0$, $\theta_1\geq0$ and for $t\geq0$, the following form 
\begin{equation}
\label{eq:KellaWhitt}
{\bf M}(z,t) = \int_0^t e^{-z Q(s)-\theta_1s}{\bf e}_{\J(s)}{\rm d}s (\Q-z{\bf \Delta}_r-\theta_1\textbf{I})+e^{-z \tau}{\bf e}_{\J(0)} - e^{-z Q(t)-\theta_1t}{\bf e}_{\J(t)}-z\int_0^t e^{-\theta_1s}{\bf e}_{\J(s)}{\rm d}L(s).
\end{equation}
The expression above follows from the general form of the Kella-Whitt martingale given in (\ref{KWM}) by considering the process $Y(\cdot)$ defined by $Y(t):= \tau+L(t)+\theta_1 t/z$, for $t\geq0$. This gives $Z(t) = \tau+X(t)+L(t)+\theta_1 t/z = Q(t) + \theta_1 t/z$. The process $Y(\cdot)$ has paths of bounded variation and is also continuous since the local time at the infimum $L(\cdot)$ is a continuous process. Hence, $M(z,\cdot)$ is a zero-mean martingale. Furthermore, due to the construction of the model we have that $\J(0)=1$. Applying the optional sampling theorem for the stopping time $\sigma$, we obtain, for all $z\geq0$, 
\begin{equation}
\label{eq: OSA}
\E\left[\int_0^{\sigma} e^{-z Q(s)-\theta_1s}{\bf e}_{\J(s)}{\rm d}s\right](\Q-z{\bf \Delta}_r - \theta_1{\bf I}) = e^{-z\tau}{\bf z}(\theta_1) - e^{-z\tau}{\bf e}_1 + z{\bf \ell}(\theta_1),
\end{equation}
where
\[{\bf z}(\theta_1)=\E \left[e^{-\theta_1\sigma}{\bf e}_{\J(\sigma)}\right] = \Big(0,\E\left[e^{-\theta_1\sigma}1_{\{\J(\sigma)=2\}}\right],\hdots,\E\left[e^{-\theta_1\sigma}1_{\{\J(\sigma)=n+1\}}\right]\Big)=\Big(0,z_2(\theta_1),\hdots,z_{n+1}(\theta_2)\Big)\]
and
\[{\bm{\ell}}(\theta_1)=\E\left[\int_0^{\sigma} e^{-\theta_1s}{\bf e}_{\J(s)}{\rm d}L(s)\right]=\Big(\E\left[\int_0^{\sigma} e^{-\theta_1s}1_{\{\J(s)=1\}}{\rm d}L(s)\right],0,\hdots,0\Big) = \Big(\ell(\theta_1),0,\hdots,0\Big).\]
The row vector ${\bm\ell}(\theta_1)$ represents the local time at the infimum up to the stopping time $\sigma$; the process $Q(\cdot)$ can hit level 0 only on the event $\{\J(s) = 1\}$. Consider the $n+1$ roots of the equation $\text{det}\left(\Q-z{\bf \Delta}_r - \theta_1{\bf I}\right) = 0$, denoted by $\rho_1(\theta_1),\hdots,\rho_{n+1}(\theta_1)$, and the corresponding column vectors ${\bf h}_k(\theta_1)$, for $k=1,\hdots,n+1$ as defined in (\ref{vectors}). Substituting $z=\rho_k(\theta_1)$ in (\ref{eq: OSA}) and taking the inner products with the column vectors ${\bf h}_k(\theta)$ we obtain, for $k=1,\hdots,n+1$, the system of equations
\begin{equation}
\label{eq:System1}
{\bf z}(\theta_1)\cdot {\bf h}_k(\theta_1)  + e^{\rho_k(\theta_1)\tau}\rho_k(\theta_1)\ell(\theta_1) =1.
\end{equation}
Solving this system of equations we obtain the $z_i(\theta_1)$, for $i=2,\hdots,n+1$.

$\circ$ Next, consider the second expectation in each of the summands at the RHS of (\ref{eq: DEC}), i.e., the term $w_i(\theta_2)=\E\left[e^{-\theta_2T}| \J(0)=i\right]$. This expectation represents the transform of the first time the process $X(\cdot)$ hits level $\tau$ given that $\J(0) =i$, for $i=2,\hdots,n+1$. The Kella-Whitt martingale, for a MAFP reflected at $K$, has, for all $z\geq0, \theta_2\geq0$ and for $t\geq0$, the following form:
\begin{align*}
{\bf M}_K(z,t) &=\hspace{-3pt} \int_0^t\hspace{-3pt} e^{-zQ(s)-\theta_2s}{\bf e}_{\J(s)}{\rm d}s (\Q-z{\bf \Delta}_r-\theta_2\textbf{I})+e^{-z \tau}{\bf e}_{\J(0)} - e^{-z Q(t)-\theta_2t}{\bf e}_{\J(t)}\\
&+ze^{-z K}\hspace{-4pt}\int_0^t e^{-\theta_2s}{\bf e}_{\J(s)}{\rm d}\bar{L}(s) \numberthis\label{eq:KLWMAFP}.
\end{align*}
The expression above follows from the general form of the Kella-Whitt martingale given in (\ref{KWM}) by considering the process $Y(\cdot)$ defined by $Y(t):= \tau-\bar{L}(t)+\theta_1 t/z$, for $t\geq0$. This gives $Z(t) = \tau+X(t)-\bar{L}(t)+\theta_1 t/z = Q(t) + \theta_1 t/z$. A similar argument as for the stopping time $\sigma$ and (\ref{wi2}) yields $n$ systems of linear equations; for each $i=2,\hdots,n+1$, we solve for the unknowns $w_i(\cdot)$ and $\bar{\ell}_j(\cdot)$, $j=2,\ldots,n+1$, using the following system:
\begin{equation}
\label{System2}
 w_i(\theta_2) +\sum_{j=2}^{n+1}\bar{\ell}_j(\theta_2)\Big(e^{-\rho_k(\theta_2)(K-\tau)}\rho_k(\theta_2){\rm h}_{k,j}(\theta_2)\Big)={\rm h}_{k,i}(\theta_2) \hspace{5mm} \text{for} \hspace{2mm} k=1,\hdots,n+1,
\end{equation}
where $\bar{\ell}_j(\theta_2) = \E \left[\int_0^T e^{-\theta_2 s}1_{\{\J(s)=j\}}{\rm d}\bar{L}(s)\right], j=2,\hdots,n+1$. Using the method of determinants we can write $w_i(\theta_2)$ in the following form: 
\begin{equation}
\label{eq: Solutionw}
w_i(\theta_2) = \E\left[e^{-\theta_2 T}|\J(0)=i\right] = \frac{\sum_{k=1}^{n+1}(-1)^{1+k}c_k(\theta_2){\rm h}_{k,i}(\theta_2)}{\sum_{k=1}^{n+1}(-1)^{1+k}c_k(\theta_2)},
\end{equation}
where, for $k=1,\hdots,n+1$, 

\begin{equation}
\label{DET}
 c_k(\theta_2) = 
 \left|
\begin{array}{llll} 
\hspace{3mm}\rho_1(\theta_2){\rm h}_{1,2}(\theta_2)e^{-\rho_1(\theta_2)(K-\tau)}  &\hspace{3mm} \hdots & \hspace{3mm} \rho_1(\theta_2){\rm h}_{1,n+1}(\theta_2)e^{-\rho_1(\theta_2)(K-\tau)} \\ 
\hspace{15mm}\vdots & \hspace{5mm} \vdots &\hspace{15mm}\vdots\\
\rho_{k-1}(\theta_2){\rm h}_{k-1,2}(\theta_2)e^{-\rho_{k-1}(\theta_2)(K-\tau)}  &\hspace{3mm} \hdots & \rho_{k-1}(\theta_2){\rm h}_{k-1,n+1}(\theta_2)e^{-\rho_{k-1}(\theta_2)(K-\tau)}\\
\rho_{k+1}(\theta_2){\rm h}_{k+1,2}(\theta_2)e^{-\rho_{k+1}(\theta_2)(K-\tau)}  &\hspace{3mm} \hdots & \rho_{k+1}(\theta_2){\rm h}_{k+1,n+1}(\theta_2)e^{-\rho_{k+1}(\theta_2)(K-\tau)}\\
\hspace{15mm}\vdots & \hspace{5mm} \vdots  &\hspace{15mm}\vdots\\
\rho_{n+1}(\theta_2){\rm h}_{n+1,2}(\theta_2)e^{-\rho_{n+1}(\theta_2)(K-\tau)}  &\hspace{3mm} \hdots & \rho_{n+1}(\theta_2){\rm h}_{n+1,n+1}(\theta_2)e^{-\rho_{n+1}(\theta_2)(K-\tau)}\\
 \end{array} \right|
\end{equation}
Denoting 
\begin{equation}
\label{eq:CT}
C(\theta_2)=\sum_{k=1}^{n+1}(-1)^{1+k}c_k(\theta_2)
\end{equation}
and substituting the expression found for $w_i(\theta_2)$ in (\ref{eq: Solutionw}) into (\ref{eq: DEC}) yields the result of Theorem \ref{th:Jointtransform} with the terms $z_i(\theta_2)$, $i=2,\hdots,n+1$, given by the system of equations in (\ref{eq:System1}).
\end{proof}

\subsection{The finite buffer queue}\label{finite buffer queue}
Using the result of Theorem \ref{th:Jointtransform} we can also study the occupation time of the workload process in a {\em finite-buffer queue} with phase-type service time distribution. Consider a queue where customers arrive according to a Poisson process with rate $\lambda$ and have a phase-type service time distribution with representation $(n,{\bm\alpha}_0,{\bf T})$. Moreover, the queue has finite capacity $K$ and work is served with rate $r_1$. The workload process $\{Q(t)\}_{t\geq0}$ is modeled using a reflected compound Poisson process with negative drift $r_1<0$ and upward jumps with a phase-type $(n,{\bm\alpha}_0,{\bf T})$ distribution. Such a process has Laplace exponent equal to
\begin{equation}
\label{eq: Laplace}
\phi(s) = -s r_1-\lambda+\lambda\hat{B}[s]= -s r_1-\lambda+\lambda {\bm \alpha}_0^{\hspace{2pt} {\rm T}}(s\textbf{I}-\textbf{T})^{-1}{\bf t}, \hspace{4mm} s\geq0
\end{equation}
where ${\bf t} = -{\bf T 1}$.  As for the MAFP in Section \ref{sec: MAFP}
we determine the joint transform of the random variables $D$ and $U$. First we introduce some notation. Define,  for $k=1,\hdots,n+1$, the vectors ${\bf h}_k(\cdot)$ as:
\begin{equation}
\label{vectors 2}
{\rm h}_{k,1}(\cdot) = 1 \hspace{2mm} \forall k=1,\hdots,n+1 \hspace{2mm} \text{and} \hspace{2mm} {\rm h}_{k,j}(\cdot)=\hat{B}_j[p_k(\cdot)] \hspace{2mm} j=2,\hdots,n+1,
\end{equation}
where $p_k(q), k=1,\hdots,n+1$ are the $n+1$ roots of the equation $\phi(s) = q$. Consider the following system of linear equations
\begin{equation}
\label{eq:System23}
\sum_{j=2}^{n+1}z_j(\theta_1){\rm h}_{k,j}(\theta_1)+p_k(\theta_1)e^{p_k(\theta_1)\tau}\ell(\theta_1) = 1, \hspace{2mm} k=1,\hdots,n+1,
\end{equation}
and define $c_k(\cdot), k=1,\hdots,n+1$ and $C(\cdot)$ as in (\ref{DET}) and (\ref{eq:CT}) above with the defference that $\rho_k(\cdot)$ is replaced by $p_k(\cdot)$. 

\begin{corollary}
\label{cor: FBQ}
Consider a compound Poisson process with negative drift $r_1<0$ and upward jumps with a phase-type $(n,{\bm\alpha}_0,{\bf T})$ distribution. Consider the process reflected at the infimum and at level $K>0$. For $\theta_1,\theta_2\geq0$, the joint transform of the random variables $D$ and $U$ is given by 
\begin{equation}
\label{finite buffer queue result}
\E e^{-\theta_1 D-\theta_2 U} = \frac{1}{C(\theta_2)}\sum_{i=2}^{n+1}z_i(\theta_1)\sum_{j=1}^{n+1}(-1)^{j+1}c_j(\theta_2),
\end{equation}
where $c_j(\theta_2), j=1,\hdots,n+1$ and $C(\theta_2)$ are as above; $z_i(\theta_1)$ for $i=2,\hdots,n+1$ are determined as the solution of (\ref{eq:System23}) and ${\bf h}_k(\cdot), k=1,\hdots,n+1$ is as in (\ref{vectors 2}).
\end{corollary}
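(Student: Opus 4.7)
The plan is to mirror the proof of Theorem \ref{th:Jointtransform} step by step, replacing the MAFP machinery with its compound-Poisson counterpart. Following the same argument that led to (\ref{eq: DEC}), the random vector $(D,U)$ decomposes by conditioning on the phase $i\in\{2,\ldots,n+1\}$ of the phase-type jump that carries the workload above the level $\tau$; one thinks of each phase-type jump as the sample path of its underlying CTMC and defines $E_i=\{\J(\sigma)=i\}$ via that auxiliary chain. With $z_i(\theta_1)$ and $w_i(\theta_2)$ still given by (\ref{eq:zi}) and (\ref{wi2}), the problem splits into the two familiar subproblems.

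For the transforms $z_i(\theta_1)$ I would invoke the Kella--Whitt martingale (\ref{KWM}) specialised to the L\'evy setting, now with the scalar Laplace exponent $\phi$ of (\ref{eq: Laplace}) playing the role of the matrix exponent $F$. Choosing $Y(t)=\tau+L(t)+\theta_1 t/z$ exactly as in the derivation of (\ref{eq:KellaWhitt}) and applying optional sampling at $\sigma$ gives the L\'evy analogue of (\ref{eq: OSA}). Substituting for $z$ the $n+1$ roots $p_k(\theta_1)$ of $\phi(s)=\theta_1$ and taking inner products with the vectors $\mathbf{h}_k(\theta_1)$ from (\ref{vectors 2}) yields the linear system (\ref{eq:System23}) in the $n+1$ unknowns $z_2(\theta_1),\ldots,z_{n+1}(\theta_1),\ell(\theta_1)$, which is uniquely solvable.

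For the transforms $w_i(\theta_2)$ I would repeat the argument for the process reflected at $K$ started in $(\tau,i)$, using the Kella--Whitt identity in the form (\ref{eq:KLWMAFP}) but with the scalar exponent $\phi$ and roots $p_k(\theta_2)$ in place of $F$ and $\rho_k(\theta_2)$. Optional sampling at $T_i$ and substitution of $z=p_k(\theta_2)$ produces a system analogous to (\ref{System2}), whose solution for $w_i(\theta_2)$ is the Cram\'er-rule expression (\ref{eq: Solutionw}) with $c_k$ and $C$ redefined through the $p_k$-roots as described just before the corollary. Inserting these factors into (\ref{eq: DEC}) delivers (\ref{finite buffer queue result}).

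The step I expect to be the main obstacle is making the decomposition (\ref{eq: DEC}) genuinely rigorous when the jumps occur in zero wall-clock time, because the naive notion of ``phase of $\J(\cdot)$ at the upcrossing'' is not literally defined in the compound-Poisson picture. The cleanest resolution, and one that lifts the entire computation from Theorem \ref{th:Jointtransform}, is to embed the reflected compound Poisson queue in a MAFP with large upward rates $r_j\to\infty$ for $j=2,\ldots,n+1$, carry out the argument there, and then pass to the limit; under this limit $\rho_k(\cdot)\to p_k(\cdot)$ and the MAFP vectors in (\ref{vectors}) collapse to the phase-type Laplace transforms in (\ref{vectors 2}), so the conclusion of Theorem \ref{th:Jointtransform} specialises to (\ref{finite buffer queue result}). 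Verifying this passage to the limit inside the determinants (\ref{DET}) is the most technical piece, and is what I would spend the bulk of the work on.
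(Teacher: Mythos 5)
Your proposal is correct and, in its final recommended form, coincides with the paper's own argument: the paper proves the corollary precisely by embedding the finite-buffer M/G/1 workload as the $r\to\infty$ limit of the MAFP of Section \ref{subs:double-refl} (with all positive rates equal to $r$ and phase generator $r{\bf T}$), invoking pathwise convergence and continuity of the reflection operators, and then specialising Theorem \ref{th:Jointtransform}, under which $\rho_k(\cdot)\to p_k(\cdot)$ and the vectors (\ref{vectors}) collapse to (\ref{vectors 2}). Your additional direct sketch via the scalar Kella--Whitt martingale is a reasonable alternative but is not the route the paper takes, and you correctly identified why the limiting argument is the cleaner way to make the phase-at-upcrossing decomposition rigorous.
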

 The workload process $\{Q(t)\}_{t\geq0}$ can be studied as the limit of a MAFP in the following sense, see also \cite[Section 7]{AsmK}. Following the construction presented in Section \ref{subs:double-refl} we define, for $r>0$, the MAFP $\{X^r(t), \mathcal{J}^r(t)\}_{t\geq0}$ where the Markov process has state space $E=\{1,\hdots,n+1\}$ and generator $\mathcal{Q}^r$ given by 
\begin{equation*}
\Q^r = \begin{bmatrix}
       -\lambda &  \lambda\bf{\alpha}_0^{\rm T}\\[0.5em]
       r{\bf t}            & r{\bf T}
  
     \end{bmatrix},
 \end{equation*}
which is a $(n+1)\times(n+1)$ matrix. We also let the positive rates be equal, i.e, $r_2=\hdots=r_{n+1}=r$ and we send $r\rightarrow\infty$ later on. The assumptions on $\lambda, {\bf t}, {\bm \alpha}_0$ and \textbf{T} are the same as in Section \ref{subs:double-refl}. On the event $\{\mathcal{J}^r(\cdot)=1\}$ the process $X^r(\cdot)$ decreases with rate $r_1<0$ and on the event $\{\mathcal{J}^r(\cdot)=i\}$, for $i=2,\hdots,n+1$, the process $X^r(\cdot)$ increases with rate $r>0$. Such a MAFP decreases linearly with rate $r_1$ during OFF-times, which are exponentially distributed with parameter $\lambda$ and increases linearly with rate $r$ during  ON-times, which have a phase-type $(n,{\bm \alpha}_0,r\textbf{T})$ distribution. By multiplying the matrix ${\bf T}$ with the factor $r$ we see that the resulting phase-type distribution behaves like a phase-type distribution with representation $(n,{\bm\alpha}_0,{\bf T})$ divided by $r$. Using the representation in (\ref{eq:MAPl}) we see that letting $r\rightarrow \infty$ the process $(X^r(t),\mathcal{J}^r(t))_{t\geq0}$ converges path-wise to a compound Poisson process with linear rate $r_1<0$ and jumps in the upward direction with phase-type $(n,{\bm\alpha}_0,\textbf{T})$ distribution. The workload process $\{Q^r(t)\}_{t\geq0}$ converges to $\{Q(t)\}_{t\geq0}$, i.e. a reflected compound Poisson process, which follows by the continuity of the reflection operators with respect to the $D_1$ topology. Hence the joint transform of $D$ and $U$ is computed by using the result established in Theorem \ref{th:Jointtransform} and letting $r\rightarrow \infty$.

\section{Numerical Computation}
\label{sec: numerics}

In this note we have studied the occupation time of the workload process $\{Q(t)\}_{t\geq0}$ of the set $[0,\tau]$ upto time $t$ for the finite buffer fluid queue with a single state (of an independently evolving Markov chain) in which the workload decreases. Special cases are ON-OFF sources where the OFF times are exponential and the ON-times follow a phase type distribution; the M/G/1 queue with phase-type jumps can be studied as a limiting special case. By considering the process $\{K-Q(t)\}_{t\geq0}$, the results for fluid queues with a single state where work accumulates is now immediate; the same then holds for ON-OFF sources with phase-type OFF times and exponential ON times and doubly reflected risk reserve processes with negative phase-type jumps.

Essential in our analysis was the joint transform of the consecutive periods below and above $\tau$, i.e., $\E e^{-\theta_1 D - \theta_2 U}$ for $\theta_1\geq0,\theta_2\geq0$. The double transform of the occupation time uniquely specifies its distribution, which can be evaluated by numerically inverting the double transform \cite{Aba 2}. Such a procedure has been carried out in \cite{Roubos} for the M/M/$s$ queue, where an explicit expression for the double transform can be derived. For the current model, the transform is given implicitly, where for given $\theta_1, \theta_2$ linear equations need to be solved. The methodology we present is rather straightforward to implement and yields an approximation for the distribution function of the occupation time up to machine precision. We first present an algorithm which uses Theorem \ref{jointtransform} in order to numerically approximate the distribution function (or the density function) of the occupation time $\alpha(\cdot)$. Afterwards using the technique presented above we let $r\rightarrow \infty$ and obtain the distribution function of the occupation time $\alpha(\cdot)$ in the M/G/1 queue with phase-type service distribution. 
\paragraph{Algorithm:} {\textbf{Input:}}~ $t\geq0,s\in[0,\infty)$ ~ {\textbf{Output:}} The distribution function $\Pb(\alpha(t)\leq s)$.  
\begin{itemize} 
\item[(1)] Compute $L_{1,2}(\theta_1,\theta_2)=\E e^{-\theta_1D-\theta_2U}$ using Theorem \ref{th:Jointtransform} and by solving the systems of linear equations given in 
(\ref{eq:System1}) and (\ref{System2}). 
\item[(2)] Compute $L_1(\theta)$ using Theorem \ref{th:Jointtransform} and setting $\theta_2=0$.
\item[(3)] Compute the double transform of the occupation time $\alpha(\cdot)$ using Theorem \ref{Old}.
\item[(4)] Use Laplace inversion in order to compute $\Pb(\alpha(t)\leq s)$.
\end{itemize}
In this Section we numerically compute the right hand sides of (\ref{jointtransform}) and we obtain a numerical approximation up to machine precision of the joint transform $\E e^{-\theta_1 D-\theta_2 U}, \theta_1\geq0, \theta_2\geq0$. We then use (\ref{DoubleTransform}) and the Laplace inversion techniques presented in \cite{Aba 2} to compute the distribution function of the occupation time. We used the Euler-Euler algorithm with $M=10$. In Figure \ref{fig: distribution function} below we present the density function of the occupation time $\alpha(\cdot)$ for a MAFP (Left) and the M/G/1 queue (Right). For both cases we consider a time horizon of $t=100$ time units and an arrival rate equal to $\lambda=1.05$. The parameters of the two models are chosen in such a way so that the load in the system is the same for all cases and equal to $\rho = 0.945$. The levels $\tau$ and $K$ are chosen equal to $\tau=0.8$ and $K=2$. We consider MAFPs with ON- times having an Erlang, exponential and Coxian distribution which have coefficient of variation less than one, equal to one and greater than one.  For the exponential distribution we choose $\mu=2$, for the Erlang we choose $m=2$ (2 phases) and $\mu=6$ and for the Coxian we choose $m=2$ (2 phases), $p=0.5,\mu_1=  18$ and $\mu_2=2.25$. For all three cases we choose a depletion rate $r_1=-1$ and the system size increases with rates $r_2=1.8$ and $r_3=3.6$. For the M/G/1 queue we consider the cases the jump size has an Erlang, exponential and Coxian distribution. For the exponential distribution we choose $\mu=1.111$, for the Erlang we choose $m=2,4$ (2 and 4 phases) and $\mu=2.222, 4.444$ and for the Coxian we choose $m=2$ (2 phases), $p=0.5$, $\mu_1= 5.555$ and $\mu_2=0.694$. We treat the case of an Erlang(4) jump distrbution only for the M/G/1 queue but similar results can be derived for the fluid process as well.

\begin{figure}[H]
\label{fig: distribution function}
\centering
\includegraphics[scale=0.5]{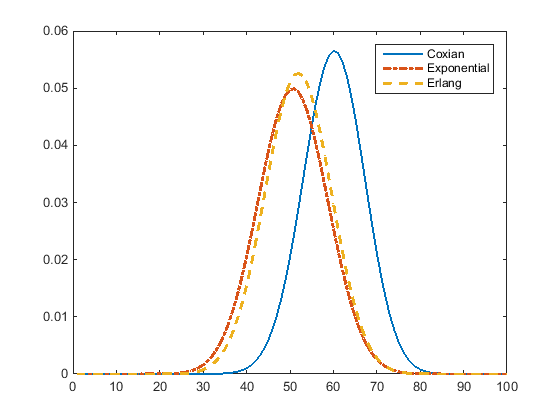}
\includegraphics[scale=0.5]{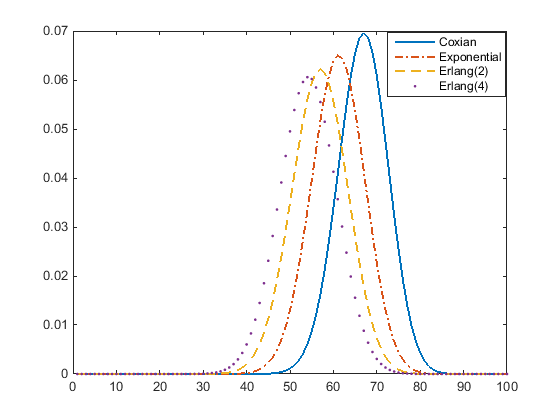}
\caption{Distribution function and density of the occupation time}
\end{figure}
In the following two figures we vary the buffer capacity $K$; the process we considered was the MAFP with OFF-times having a Coxian distribution. The parameters of the distribution are the same as above for the MAFP.  

\begin{figure}[H]
\label{fig: buffer content varies}
\includegraphics[scale=0.5]{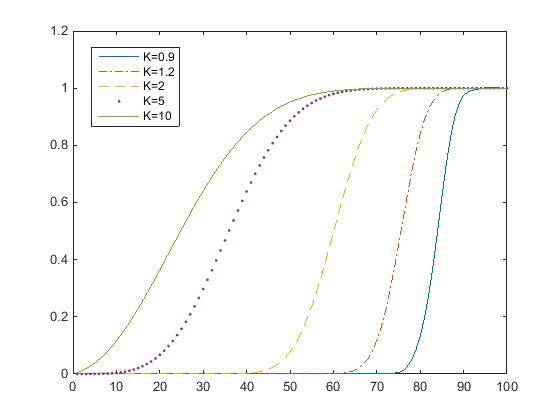}
\includegraphics[scale=0.5]{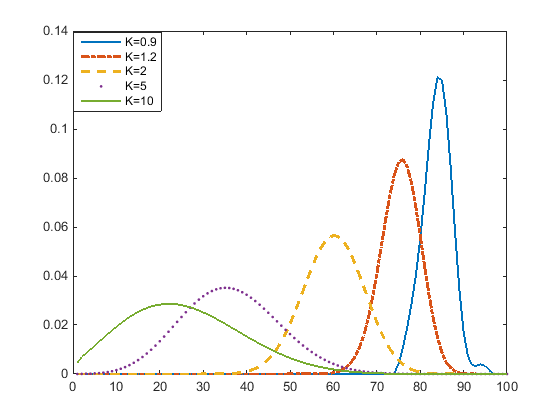}
\caption{Distribution function and density of the occupation time as $K$ varies}
\end{figure}

In the figure below we consider the M/G/1 queue (left figure) and we vary the arrival rate in order to study the occupation time for different values of the occupation rate $\rho$. The service time has an Erlang distribution with two phases and $\mu=2.222$. In the second figure we study the convergence of the distribution function of the occupation time for the case all the positive rates are set equal and take the limit as $r\rightarrow\infty$, see also Section \ref{finite buffer queue}. In this case we consider a MAFP with Coxian ON-times with parameters as above. 

 \begin{figure}[H]
\label{fig: last two}
\centering
\includegraphics[scale=0.5]{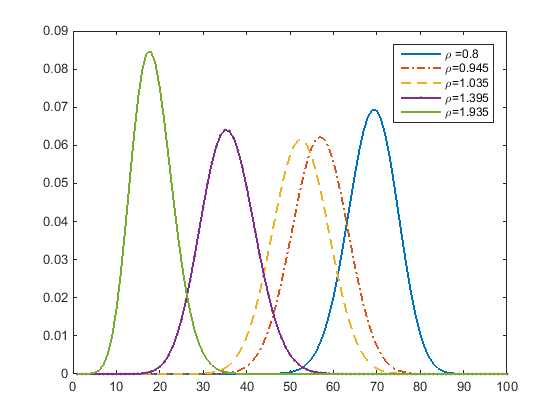}
\includegraphics[scale=0.5]{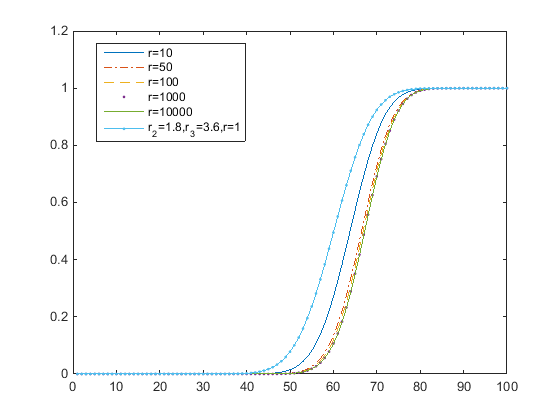}
\caption{Occupation time as occupation rate varies (Left) and the occupation time as $r$ grows (Right)}
\end{figure}

\section*{Acknowledgements}

We would like to thank the associate editor for his inspiring comments and the anonymous referee who analysed the paper with great care. Their suggestions helped us 
improve the paper significantly.

The research of N. Starreveld and M. Mandjes is partly funded by the NWO Gravitation project
N{\sc etworks}, grant number 024.002.003.


\end{document}